\def\arrowdown#1#2{\Big\downarrow \rlap{$\vcenter{\hbox{$\scriptstyle#2$}}$}
{\hbox to -10pt{\hss{$\vcenter{\hbox{$\scriptstyle#1$}}$}}}}
\def\arrowup#1#2{\Big\uparrow \rlap{$\vcenter{\hbox{$\scriptstyle#2$}}$}
{\hbox to -10pt{\hss{$\vcenter{\hbox{$\scriptstyle#1$}}$}}}}
\newtheorem{thm}{Theorem}[section]
\newtheorem{Remark}[thm]{Remark}
\newcounter{ex}[section]
\numberwithin{equation}{section}
\numberwithin{thm}{section}
\def\thfill{\null\nobreak\hfill}
\def\endproof{\thfill\vbox{\hrule
  \hbox{\vrule\hbox to 5pt{\vbox to 5pt{\vfil}\hfil}\vrule}\hrule}}
\begin{document}
\title[An element of order $4$ in the Nottingham group at the prime $2$]{An element of order $4$ in the Nottingham group at the prime $2$}
\author{T. Chinburg}
\address{T.C.: Department of Mathematics\\
University of Pennsylvania
\\Philadelphia, Pennsylvania 19104-6395, U. S. A.
}
\email{ted@math.upenn.edu}
\thanks{Supported by NSF Grant DMS05-00106}
\author{P. Symonds}
\address{P.S.: \\School of Mathematics\\
University of Manchester\\Oxford Road, 
Manchester M13 9PL
\\Manchester M13 9PL
United Kingdom
}
\email{Peter.Symonds@manchester.ac.uk}

\subjclass[2000]{Primary 20F29; Secondary 11G05, 14H37}
\date{\today}
%\author{T. Chinburg\thanks{Supported by NSF Grant DMS00-70433}, P. Symonds .} 

\maketitle
%\medbreak
%\centerline{ Boris Badinoff and Peter Symonds}
%\medbreak
%\centerline{Draft of Jan. 24, 2008}

\section{Introduction}
The Nottingham group $\mathcal{N}(k)$ over a field $k$ of characteristic $p > 0$
is the group of continuous automorphisms of the ring $k[[t]]$ which are equal to the
identity modulo $t^2$.   The object of this paper is to construct for $p = 2$ an explicit element of order $4$ in $\mathcal{N}(k)$.  After giving the proof
we will discuss the relation of this construction to some of the literature concerning  elements of order $p^2$
in $\mathcal{N}(k)$.

\begin{thm}
\label{thm:theresult}
Suppose $k$ has characteristic $p = 2$.  An automorphism $t \mapsto \sigma(t)$ of order $4$ of $k[[t]]$ is given
by setting
\begin{eqnarray}
\label{eq:sigmaform}
\sigma(t) &=& t + t^2 + (t^6) + (t^{12} + t^{14}) + (t^{24} + t^{26} + t^{28} + t^{30}) + (t^{48} + \cdots + t^{62}) + \cdots\nonumber\\
&=& t + t^2 + \sum_{j = 0}^\infty \sum_{\ell =0 }^{2^{j}-1} t^{6\cdot 2^j+2 \ell}
\end{eqnarray}
\end{thm}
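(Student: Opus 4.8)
The plan is to put $\sigma$ into closed form, to recognise the resulting recursion as the local incarnation of an automorphism of the supersingular elliptic curve in characteristic $2$, and to read the order off from that curve. For the first step, observe that for each $j\ge 0$
\[
\sum_{\ell=0}^{2^j-1}t^{6\cdot 2^j+2\ell}
\;=\;t^{6\cdot 2^j}\,\frac{(t^2)^{2^j}+1}{t^2+1}
\;=\;t^{6\cdot 2^j}(1+t)^{2^{j+1}-2}
\;=\;Q_j(t)^2,\qquad Q_j(t):=t^{3\cdot 2^j}(1+t)^{2^j-1},
\]
the middle equality being the characteristic $2$ identity $(t^2)^{2^j}+1=(1+t^2)^{2^j}$. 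Since squaring is a continuous additive endomorphism of $k[[t]]$,
\[
\sigma(t)=t+t^2+\Bigl(\sum_{j\ge 0}Q_j(t)\Bigr)^{2}=t+t^2+R^{2},\qquad R:=\sum_{j\ge 0}t^{3\cdot 2^j}(1+t)^{2^j-1}.
\]
Setting $T:=(1+t)R=\sum_{j\ge 0}\bigl((1+t)t^3\bigr)^{2^j}$, the sum telescopes under squaring to $T^2+T=(1+t)t^3$, that is, $R^2(1+t)+R=t^3$; substituting $R^2=(t^3+R)/(1+t)$ back and simplifying in characteristic $2$ yields the closed form
\[
\sigma(t)=\frac{t+R}{1+t},\qquad R=R(t)\in k[[t]],\quad R^2(1+t)+R=t^3 .
\]
In particular $\sigma$ has nonzero linear term, so it is a continuous automorphism of $k[[t]]$.

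Next I would introduce geometry. The equation $R^2(1+t)+R=t^3$ defines a smooth affine curve, and $(t,R)=(0,0)$ is a $k$-point at which $t$ is a uniformiser while $R$ vanishes to order $3$. Putting $S=(1+t)R$ turns the equation into $S^2+S=t^3+t^4$, and the substitution $u=t$, $v=S+t^2+t$ brings it to the Weierstrass form $E\colon v^2+v=u^3+u$, the supersingular elliptic curve over ${\bf F}_2$ (genus $1$, $j$-invariant $0$). Under this substitution the point becomes $(u,v)=(0,0)$, the branch of $v$ there is $v(t)=(1+t)R(t)+t+t^2=t+t^2+\cdots$, and since $v=(1+t)(R+t)$ one gets
\[
\sigma(t)=\frac{R+t}{1+t}=\frac{v}{(1+u)^2}.
\]

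Finally I would exhibit the automorphism. I claim that
\[
\theta\colon(u,v)\longmapsto\Bigl(\frac{v}{(1+u)^2},\ \frac{u(1+u^2+v)}{(1+u)^3}\Bigr)
\]
is an automorphism of $E$ of order $4$ fixing $(0,0)$. It is a morphism $E\to E$ because the two target coordinates again satisfy $v^2+v=u^3+u$ after reduction modulo $v^2+v-u^3-u$; it is an isomorphism because $\theta^{\ast}u=v/(1+u)^2$ has divisor $(0,0)+O-2(1,1)$ on $E$ and hence degree $2=\deg u$, forcing $\deg\theta=1$; it is defined over ${\bf F}_2$ and manifestly fixes $(0,0)$. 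With uniformiser $t=u$ and $v=v(t)$ as above, $\theta$ acts on $\widehat{\mathcal O}_{E,(0,0)}=k[[t]]$ by $t\mapsto v(t)/(1+t)^2=\sigma(t)$, and the resulting homomorphism $\mathrm{Aut}(E,(0,0))\to\mathrm{Aut}\,k[[t]]$ is injective (an automorphism of $E$ fixing $(0,0)$ and inducing the identity on the completed local ring is the identity), so $\mathrm{ord}(\sigma)=\mathrm{ord}(\theta)$. A direct computation of $\theta^2$ gives $\theta^{2\ast}u=u(1+u)/(1+u^2+v)$, and applying $\theta^2$ once more to this expression returns $u$; thus $\theta^4$ fixes the degree $2$ function $u$, so $\theta^4$ lies in $\mathrm{Gal}\bigl(k(E)/k(u)\bigr)=\{\,\mathrm{id},\ v\mapsto v+1\,\}$. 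The nontrivial element of this group sends $(0,0)$ to $(0,1)$, whereas $\theta^4$ fixes $(0,0)$; therefore $\theta^4=\mathrm{id}$ and $\sigma^4=\mathrm{id}$. Since $\sigma\ne\mathrm{id}$ and a low order expansion gives $\sigma^2(t)=t+t^4+\cdots\ne t$, the order of $\sigma$ is exactly $4$.

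The main obstacle is the claim in the last paragraph: producing the second coordinate of $\theta$ and checking that $(\theta^{\ast}u,\theta^{\ast}v)$ really defines an order $4$ automorphism of $E$ — equivalently, recognising the recursion $\sigma(t)=(t+R)/(1+t)$ as the restriction to the formal neighbourhood of $(0,0)$ of an honest automorphism of $E$ rather than as an accident among power series. Once that is in place, the order $4$ assertion reduces to the finite checks that $\theta$ preserves $v^2+v-u^3-u$, that $\theta^{4\ast}u=u$, and that $\sigma^2(t)\ne t$, together with the routine series manipulations of the first paragraph.
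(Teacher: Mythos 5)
Your proposal is correct, and the computations you defer as ``finite checks'' do all go through (I verified in particular that $\theta$ preserves $v^2+v+u^3+u$, that $\theta^{2*}v=v(1+u+v)/(1+u^2+v)^2$, and that $\theta^{4*}u=u$); but you reach the result by a genuinely different, and more laborious, route than the paper. Both arguments hinge on the same Artin--Schreier element: your $R$ is exactly the paper's $w$, defined by $(1+t)w^2+w=t^3$, your telescoping identity $T^2+T=t^3(1+t)$ is the paper's splitting $s=\sum_i(t^3+t^4)^{2^i}$ of $v^2+v=t^3+t^4$, and your closed form $\sigma(t)=(t+R)/(1+t)$ is the paper's \emph{definition} of $\sigma$. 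The difference is direction and the verification of the order. The paper starts from the two-variable ring $A=k[[t,w]]/(w+(1+t)w^2+t^3)$ with $\sigma(t)=(t+w)/(1+t)$ and $\sigma(w)=w/(1+t)$; composing these gives $\sigma^2(t)=t/(1+w)$ and $\sigma^2(w)=w/(1+w)$, whence $\sigma^4=\mathrm{id}$ in two lines, and the series is then obtained by expansion. You instead run the computation backwards from the series and certify the order on the Weierstrass model $v^2+v=u^3+u$. Your $\theta$ is the paper's $\sigma$ in disguise --- indeed $1+u^2+v=(1+t)(1+w)$, so your $\theta^{2*}u=u(1+u)/(1+u^2+v)$ is precisely $t/(1+w)$ --- but the second coordinate $u(1+u^2+v)/(1+u)^3$ makes the iteration messier and forces the extra Galois-theoretic step ($\theta^4$ fixes $k(u)$ and the point $(0,0)$, hence is the identity). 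What your route buys is a self-contained geometric explanation of \emph{why} the order is $4$ (an order-$4$ automorphism of a supersingular elliptic curve fixing a rational point), which the paper only sketches in the remark following the theorem; what it costs is having to guess and verify $\theta$'s second coordinate, which the paper's choice of the coordinates $(t,w)$ renders unnecessary.
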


\begin{proof} Let $A = k[[t,w]]/(w + (1+t)w^2 + t^3)$.  We may define a continuous automorphism
$\sigma$ of $A$ over $k$ by 
\begin{equation}
\label{eq:sigmadef}
\sigma(t) = (t+w)/(1 + t) \quad \mathrm{and} \quad \sigma(w) = w/(1+t).
\end{equation}
It is easily verified that $\sigma$ has order 4. 

Set $v=w(1+t)$ so that $A = k[[t,v]]/(v^2+v+t^3 +t^4)$ and let 
\begin{equation}
\label{eq:vform}
s = \sum_{i = 0}^{\infty} (t^3 + t^4)^{2^i}.
\end{equation}
Then $s^2 + s = t^3 + t^4$, so $v^2+v+t^3 +t^4 = (v+s)(v+s+1)$. The second factor is invertible, so $A=k[[t,v]]/(v+s)$. Thus $v=s \in A$ and $w=s/(1+t)$.
Substituting this into the expression for $\sigma(t)$ in (\ref{eq:sigmadef}) leads to
$$\sigma(t) = \frac{t}{1+t} + \frac{s}{(1+t)^2} =  \frac{t}{1+t} + \frac{t^3 + t^4}{(1+t)^2} + \frac{\sum_{i=1}^\infty (t^3(1+t))^{2^i}}{(1 + t)^2}.$$
This leads to the formula
in the Theorem on setting $j = i-1$ in the last sum on the right.
  \end{proof}

\begin{Remark}
\label{rem:ellrem} {\rm Let $(x:y:z)$ be homogeneous coordinates for the projective
space $\mathbb{P}^2_k$.  We may define an order $4$ automorphism $\sigma$
of $\mathbb{P}^2_k$ by $\sigma(x) = x+y$, $\sigma(y) = y$ and $\sigma(z) = x + z$.
This $\sigma$ stabilizes the curve $E$ with homogeneous
equation  $z^2 y + (z + x)y^2 + x^3 = 0$ as well as the point $Q = (0:0:1)$ on $E$.  
By \cite[Appendix A]{Silverman}, $E$ is a supersingular elliptic curve with origin $Q$ and $j$-invariant $0$.  The completion of the local ring of $E$
at $Q$ is isomorphic to the ring $A \cong k[[t]]$ above when we let $t = x/z$
and $v = (1 + x/z)(y/z)$, and the action of $\sigma$ on $A$ results from
the action of $\sigma$ on $E$.  }
\end{Remark}

We now discuss some literature pertaining to Theorem \ref{thm:theresult}.

 Camina has shown
in \cite{Camina1} that $\mathcal{N}(k)$ contains every countably based pro-$p$ group
as a subgroup \cite{Camina1}, so in particular $\mathcal{N}(k)$ contains every finite $p$-group.
In \cite{Klopsch}, Klopsch shows that representatives for the conjugacy classes of elements
of order $p$ are given by the automorphisms $t \mapsto t(1 - at^m)^{-1/m}$ as $m$ ranges over
positive integers prime to $p$ and $a$ ranges over elements of $k^*$.  In \cite{Camina2},
Camina wrote concerning explicitly described subgroups of $\mathcal{N}(k)$
that ``An element of order $p^2$ is still not known."   Order $p^2$ automorphisms of $k[[t]]$ over 
$k$ have been extensively studied
by Green and Matignon \cite{GM}, and their work contains implicit formulas
for elements  of $\mathcal{N}(k)$ of order $p^2$.  Barnea and Klopsch  mention
in the introduction of \cite{BK} that the subgroups of the Nottingham group they consider
contain elements of arbitrarily large $p$-power order.  
 In \cite{Lubin}, Lubin
uses  formal groups of height $1$ to give an explicit construction iterative construction 
 of an element 
 \begin{equation}
 \label{eq:explicit}
 t \to \sigma(t) = \sum_{i = 1} a_i t^i
 \end{equation} of $\mathcal{N}(k)$ of order $p^n$ for each
 integer $n \ge 2$.  Related constructions of such
elements have been considered recently by Green in \cite{Green}.   

The formula in 
Theorem \ref{thm:theresult} is of interest because it does not require an iterative
procedure to produce the $a_i$ in (\ref{eq:explicit}), and because the height
of the formal group associated to the elliptic curve $E$ in Remark \ref{rem:ellrem}
is $2$.  It would be very interesting if formal groups of height greater than $1$
lead to similar formulas for all primes $p$.

\end{document}